\newcommand{\cpm}[1]{\tilde{#1}}
\begin{document}
	
	\title*{A Convergence Analysis of the Parallel Schwarz Solution of the Continuous Closest Point Method}
	\titlerunning{Convergence of PS-CPM}
	
	\author{Alireza Yazdani, Ronald D. Haynes, and Steven J. Ruuth}
	
	\institute{Alireza Yazdani, Steven J. Ruuth \at Simon Fraser University, 8888 University Dr., Burnaby, BC V5A 1S6, \email{alirezay@sfu.ca,sruuth@sfu.ca}
		\and Ronald D. Haynes \at Memorial University of Newfoundland, 230 Elizabeth Ave, St. Johns, NL A1C 5S7, \email{rhaynes@mun.ca}}
	
	\maketitle
	\abstract*{The discretization of surface intrinsic PDEs has challenges that one might not face in the flat space. The closest point method (CPM) is an embedding method that represents surfaces using a function that maps points in the flat space to their closest points on the surface. This mapping brings intrinsic data onto the embedding space, allowing us to numerically approximate PDEs by the standard methods in the tubular neighborhood of the surface. Here, we solve the surface intrinsic positive Helmholtz equation by the CPM paired with finite differences which usually yields a large, sparse, and non-symmetric system. Domain decomposition methods, especially Schwarz methods, are robust algorithms to solve these linear systems. While there have been substantial works on Schwarz methods, Schwarz methods for solving surface differential equations have not been widely analyzed. In this work, we investigate the convergence of the CPM coupled with Schwarz method on 1-manifolds in d-dimensional space of real numbers.}
	\section{Introduction}
	\label{sec:yazdani_contrib_intro}
	
	Consider the surface intrinsic positive Helmholtz equation
	\begin{equation}\label{eq:yazdani_contrib_helmholtz}
	(c - \Delta_\mathcal{S}) u = f,
	\end{equation}
	where $\Delta_\mathcal{S}$ denotes the Laplace-Beltrami operator associated with the surface $\mathcal{S} \subset \mathbb{R}^d$, and $c > 0$ is a constant. Discretization of this equation arises in many applications including  the time-stepping of reaction-diffusion equations on surfaces \cite{Macdonald9209}, the comparison of shapes \cite{REUTER2009739}, and the solution of Laplace-Beltrami eigenvalue problems \cite{Macdonald_2011}. As a consequence,
	considerable recent work has taken place to develop efficient, high-speed solvers for this and other related PDEs on surfaces.
	
	There are several methods to solve surface intrinsic differential equations (DEs).
	If a surface parameterization (a mapping from the surface to a parameter space) is known, then the equation can be solved in the parameter domain \cite{degener2003}.
	For triangulated surfaces, a finite element discretization can be created \cite{FED}.
	Alternatively, we can solve the DE in a neighborhood of the surface using standard PDE methods in the underlying embedding space \cite{BERTALMIO2001759,CPM2008,Chu2018,Martin2020EquivalentEO}. 
	Here, we discretize via the {\it closest point method} (CPM), which is an embedding method suitable for the discretization of PDEs on surfaces. The closest point method leads to non-symmetric linear systems to solve. On complex geometries or when varying scales arise, iterative solvers can be slow despite the sparsity of the underlying systems. In order to develop an efficient iterative solver which is also capable of parallelism, Parallel Schwarz (PS) and Optimized Parallel Schwarz (OPS) algorithms have been applied to the CPM for \eqref{eq:yazdani_contrib_helmholtz} in \cite{May2020DD}. Here, we study the convergence of the PS-CPM at the continuous level for smooth, closed 1-manifolds where periodicity is inherent in the geometry.  As shown in Section~\ref{sec:yazdani_contrib_RAS-CPM}, this problem, posed in $\mathbb{R}^d$, is  equivalent to a one-dimensional periodic problem.  This leads us to study the 1-dimensional periodic problem in detail.
	
	While there has been substantial work carried out on Schwarz methods, they have not been widely used for solving surface DEs. The shallow-water equation is solved with a PS iteration on the cubed-sphere with a finite volume discretization in \cite{Yang2010}. PS and OPS methods for the 2D positive definite Helmholtz problem are investigated on the unit sphere in \cite{LOISEL2010}. In \cite{LOISEL2010}, the analysis is based on latitudinal subdomains that are periodic in longitude. Hence, the Fourier transform is a natural choice to solve the subproblems analytically and obtain the contraction factor. PS and OPS methods are also analyzed with an overset grid for the shallow-water equation in \cite{QADDOURI2008}. In that work, the discretization in 1D is reduced to the positive definite Helmholtz problem on the unit circle. The unit circle case is investigated with two equal-sized subdomains, and a convergence factor is derived for the configuration in terms of the overlap parameter.  In addition, the 2D positive definite Helmholtz problem on the sphere is analyzed where the subdomains are derived from a Yin-Yang grid system.  It is worth noting a key difference between our work and \cite{QADDOURI2008}.   In our problem, domain subdivision is carried out in the underlying embedding space.  As a consequence, the unequal-sized subdomain case is essential to our understanding of the problem. 
	
	The convergence of PS and OPS for general surfaces remains unknown. Section~\ref{sec:yazdani_contrib_CPM} reviews the CPM. Section~\ref{sec:yazdani_contrib_RAS-CPM} studies the PS-CPM combination for the surface intrinsic positive Helmholtz equation \eqref{eq:yazdani_contrib_helmholtz} by analyzing an equivalent one-dimensional periodic problem. This section proves convergence and derives convergence factors. 
	Although \eqref{eq:yazdani_contrib_helmholtz} on 1-manifolds can be solved through parameterization, we only investigate the convergence of the PS-CPM for 1-manifolds in this paper with the hope of extending our work to higher dimensional manifolds in the future.
	Section~\ref{sec:yazdani_contrib_NumSim} provides a numerical experiment in which the PS-CPM contraction factor converges to its PS counterpart by increasing the grid resolution. Finally, Section~\ref{sec:yazdani_contrib_Conc} gives conclusions.
	\section{The Closest Point Method}
	\label{sec:yazdani_contrib_CPM}
	
	The CPM was first introduced in \cite{CPM2008} for explicitly solving evolutionary PDEs on surfaces. 
	It is an embedding method and allows the use of standard Cartesian methods for the discretization of surface intrinsic differential operators. The surface representation and extension of quantities defined on the surface to the surrounding embedding space  is done using the closest point mapping
	$\text{cp}_\mathcal{S}(x) = \underset{s \in \mathcal{S}}{\text{arg\,min}} |x - s|$
	for $x \in \mathbb{R}^d$.  This mapping gives the closest point in Euclidean distance to the surface for any point $x$ in the embedding space. It is smooth for any point in the embedding space within a distance $R_0$ of a smooth surface, where $R_0$ is a lower bound for the surface radii of curvature \cite{Chu2018}.
	
	Suppose the closest point mapping of a manifold is smooth over a tubular neighborhood $\Omega \subset \mathbb{R}^d$ of the manifold. We introduce $\cpm{u}:\Omega \rightarrow \mathbb{R}$ as the solution to the embedding CPM problem.
	Two principles are fundamental to the CPM: \textit{equivalence of gradients} and \textit{equivalence of divergence} \cite{CPM2008}.
	Assuming a smooth manifold $\mathcal{S}$, the equivalence of gradients principle gives us $\nabla \cpm{u}(\text{cp}_\mathcal{S}) = \nabla_\mathcal{S}u$ since $\cpm{u}(\text{cp}_\mathcal{S})$ is constant in the normal direction to the manifold.
	Further, applying the equivalence of divergence principle, $\nabla \cdot (\nabla \cpm{u}(\text{cp}_\mathcal{S})) = \nabla_\mathcal{S} \cdot (\nabla \cpm{u}(\text{cp}_\mathcal{S}))$ holds on the manifold.  Therefore, on the manifold, 
	\begin{equation} \label{eq:yazdani_contrib_equivlap}
	\Delta \cpm{u}(\text{cp}_\mathcal{S}) = \nabla \cdot (\nabla \cpm{u}(\text{cp}_\mathcal{S})) = \nabla_\mathcal{S} \cdot (\nabla \cpm{u}(\text{cp}_\mathcal{S})) = \nabla_\mathcal{S} \cdot (\nabla_\mathcal{S} u) = \Delta_\mathcal{S} u.
	\end{equation}
	A modified version of \eqref{eq:yazdani_contrib_equivlap} offers improved stability at the discrete level and is normally used in elliptic problems \cite{ImplicitCPM2008,Macdonald_2011,vonglehn2013embedded}. 
	The regularized Laplace operator is
	\begin{equation}\label{eq:yazdani_contrib_regCPM}
	\Delta^{\#}_h \cpm{u} = \Delta \cpm{u}(\text{cp}_\mathcal{S}) - \frac{2d}{h^2}\left[ \cpm{u} - \cpm{u}(\text{cp}_\mathcal{S}) \right],
	\end{equation}
	where $0 < h \ll 1$.  As in \cite{ImplicitCPM2008,Macdonald_2011}, we take the parameter $h$ to be equal to the mesh spacing in the fully discrete setting.
	
	Equation~\eqref{eq:yazdani_contrib_regCPM} gives our replacement for the Laplace-Beltrami operator. 
	Applying it, and extending the function $f$ off the manifold using the closest point mapping gives our embedding equation for \eqref{eq:yazdani_contrib_helmholtz}: 
	\begin{equation} \label{eq:yazdani_contrib_CPM}
	(c - \Delta^{\#}_h) ~\cpm{u}= f(\text{cp}_\mathcal{S}), ~~~x\in \Omega.
	\end{equation}
	
	Standard numerical methods in the embedding space may be applied to \eqref{eq:yazdani_contrib_CPM} to complete the discretization.   In this paper, we apply standard second order finite differences on regular grids to approximate the derivative operators. Because discrete points do not necessarily lie on $\mathcal{S}$, an interpolation scheme is needed to recover surface values. Utilizing tensor product barycentric Lagrangian interpolation \cite{berrut2004barycentric}, an extension matrix $\textbf{E}$ is defined to extend values off of the manifold. Note that the extension matrix may be viewed as a discretization of the closest point mapping.
	
	Using a mesh spacing $h$ and degree-$p$ interpolation polynomials, it is sufficient to numerically approximate equation \eqref{eq:yazdani_contrib_CPM} in a narrow tube around $\mathcal{S}$ of radius $r = \sqrt{(d-1)(p+1)^2+(p+3)^2} h / 2$. A more thorough explanation of the CPM at the discrete level can be found in \cite{ImplicitCPM2008}.
	\section{The PS-CPM Convergence Analysis}
	\label{sec:yazdani_contrib_RAS-CPM}
	
	PS is an overlapping domain decomposition method which is designed to iteratively solve DEs over subdomains, distributing the computational costs.
	It is also capable of parallelism and can be combined with the CPM, a method whose underlying linear system is sparse.
	We assume $\mathcal{S}$ to be a smooth, closed 1-manifold in $\mathbb{R}^d$ with arclength $L$. 
	We consider the case with two subdomains, but the discussion can be generalized to any finite number of subdomains \cite{Alireza2021}. 
	We let the disjoint subdomains be $\tilde{\mathcal{S}_1}$ and $\tilde{\mathcal{S}_2}$.
	We parameterize the manifold by arclength $s$ starting at a boundary of $\tilde{\mathcal{S}_1} $. 
	Next, we let the overlapping subdomains be $\mathcal{S}_1 = [a_1, b_1]$ and $\mathcal{S}_2 = [a_2, b_2]$.
	Since overlapping subdomains are needed, we have $a_1 < 0$ and $b_2 > L$.  Define $\ell_1 \equiv b_1 - a_1$ and $\ell_2 \equiv b_2 - a_2$ to be the subdomain lengths. 
	Further,  let $\delta_1 = b_1 - a_2$ and $\delta_2 = b_2 - (a_1 + L)$ denote the subdomain overlaps at $s=\ell_1$ and $s=\ell_2$, respectively.
	In addition, we assume $0 < \delta_1 + \delta_2 < \min\{\ell_1, \ell_2\}$. 
	In the CPM, the overlapping subdomains  $\Omega_1$ and $\Omega_2$, corresponding to $\mathcal{S}_1$ and $\mathcal{S}_2$, are constructed using a graph-based partitioning algorithm applied over the computational tube \cite{May2020DD}.
	Then, the PS-CPM for equation \eqref{eq:yazdani_contrib_helmholtz} is: for $n=0,1,\ldots$ and for $j=1,2$ solve 
	\begin{equation} \label{eq:yazdani_contrib_RASCPM}
	\begin{cases}
	(c - \Delta^{\#}_h)~\cpm{u}^{n+1}_j = f(\text{cp}_\mathcal{S}), & \text{in } \Omega_j, \\
	\cpm{u}^{n+1}_j = \cpm{u}^n(\text{cp}_\mathcal{S}), & \text{on } \Gamma_{jk}, k \neq j 
	\end{cases}
	\end{equation}
	where $\Gamma_{jk}$ for $j, k = 1, 2$ are the boundaries of subdomains $j$ and $k$.
	
	To begin, an initial guess is needed over the subdomain boundaries. 
	An iteration may then be completed by solving all subproblems. 
	This gives new boundary values that can be used to initiate the next iteration, and so on, until convergence.
	In this form of the Schwarz algorithm, there is no concept of a global solution.
	In order to construct the global solution, a weighted average of subdomain solutions is utilized \cite{Efstathiou2003}.
	In this paper, at any time, the approximation of the global solution is given as the union of the disjoint subdomain solutions $u^n = u^n_1|_{\tilde{\mathcal{S}}_1} \cup u^n_2|_{\tilde{\mathcal{S}}_2}$.
	This is called restricted additive Schwarz (RAS), and we use the labels PS and RAS interchangeably.
	Our analysis examines the equivalent one dimensional periodic problem formulated below.
	
	\begin{theorem}\label{thm:yazdani_contrib_theorem1}
		In the limit as $h\rightarrow 0$,  and using two subdomains $\mathcal{S}_1 = [a_1, b_1]$ and $\mathcal{S}_2 = [a_2, b_2]$, the PS-CPM for the positive surface intrinsic Helmholtz equation \eqref{eq:yazdani_contrib_RASCPM} is equivalent to:
		\begin{equation} \label{eq:yazdani_contrib_RASCPM2}
		\begin{cases}
		(c - \frac{\mathrm{d}^2}{\mathrm{d}s^2})u^{n+1}_1 = f, & \text{in } \mathcal{S}_1, \\
		u^{n+1}_1(a_1) = u^n_2(a_1+L), \\
		u^{n+1}_1(b_1) = u^n_2(b_1),
		\end{cases}
		\quad , \quad
		\begin{cases}
		(c - \frac{\mathrm{d}^2}{\mathrm{d}s^2})u^{n+1}_2 = f, & \text{in } \mathcal{S}_2, \\
		u^{n+1}_2(a_2) = u^n_1(a_2), \\
		u^{n+1}_2(b_2) = u^n_1(b_2-L),
		\end{cases}
		\end{equation}
		where $L$ is the manifold length.
	\end{theorem}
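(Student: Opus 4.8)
The plan is to separate two essentially independent matters: (i) identifying the $h\to 0$ limit of the embedding operator $\Delta^{\#}_h$ restricted to the manifold, and (ii) rewriting that limit in a one–dimensional arclength chart, transmission conditions included.

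For (i) I would write each iterate as $\cpm{u}^{n+1}_j = v\circ\text{cp}_\mathcal{S} + w$, where $v := \cpm{u}^{n+1}_j\big|_{\mathcal{S}}$ and $w := \cpm{u}^{n+1}_j - \cpm{u}^{n+1}_j(\text{cp}_\mathcal{S})$ is the deviation from being constant along normals (note $\text{cp}_\mathcal{S}$ is idempotent, so $\cpm{u}^{n+1}_j\circ\text{cp}_\mathcal{S} = v\circ\text{cp}_\mathcal{S}$). Substituting into the first line of \eqref{eq:yazdani_contrib_RASCPM} and using \eqref{eq:yazdani_contrib_regCPM} gives
\[
c\,(v\circ\text{cp}_\mathcal{S}) - \Delta(v\circ\text{cp}_\mathcal{S}) + c\,w + \frac{2d}{h^2}\,w = f(\text{cp}_\mathcal{S}) \quad\text{in }\Omega_j .
\]
Since the penalty coefficient $2d/h^2$ blows up while the remaining terms stay bounded, one is forced to have $w = O(h^2)$, so $\cpm{u}^{n+1}_j\to v\circ\text{cp}_\mathcal{S}$ and the $h\to0$ (continuous CPM) limit is constant along normals. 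The key observation is that $\text{cp}_\mathcal{S}$ is the identity on $\mathcal{S}$, so $w\equiv 0$ on $\mathcal{S}$ for \emph{every} $h$ and the penalty term contributes nothing there; restricting the displayed identity to $\mathcal{S}$ and invoking the equivalence of gradients and of divergence, i.e. \eqref{eq:yazdani_contrib_equivlap} (which gives $\Delta(v\circ\text{cp}_\mathcal{S})\big|_{\mathcal{S}} = \Delta_\mathcal{S} v$), leaves $(c-\Delta_\mathcal{S})v = f$ on $\mathcal{S}$. The same reduction applied to the second line of \eqref{eq:yazdani_contrib_RASCPM} turns the interpolated data $\cpm{u}^{n}(\text{cp}_\mathcal{S})$ into the previous global iterate sampled on $\text{cp}_\mathcal{S}(\Gamma_{jk})\subset\mathcal{S}$.

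For (ii) I would parameterize the closed curve by arclength $s\in\mathbb{R}/L\mathbb{Z}$ starting at a boundary of $\tilde{\mathcal{S}}_1$; the induced metric is then $\mathrm{d}s^2$, so $\Delta_\mathcal{S} = \mathrm{d}^2/\mathrm{d}s^2$, and $\text{cp}_\mathcal{S}$ carries $\Omega_j$ onto the arclength interval representing $\mathcal{S}_j$, namely $[a_j,b_j]$ read modulo $L$. Hence the subdomain equation becomes $(c-\mathrm{d}^2/\mathrm{d}s^2)u^{n+1}_j = f$ on $[a_j,b_j]$. To pin down the Dirichlet data I would note that $\text{cp}_\mathcal{S}(\Gamma_{jk})$ is exactly the two-element set of endpoints of $[a_j,b_j]$, and use $a_1<0$, $b_2>L$ and $0<\delta_1+\delta_2<\min\{\ell_1,\ell_2\}$ to verify that $a_2,\,b_2-L\in(a_1,b_1)$ and $b_1,\,a_1+L\in(a_2,b_2)$; thus each such endpoint lies in the interior of the neighbouring subdomain, where the global iterate is furnished by that neighbour's previous solution. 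Tracking the identification $s\equiv s+L$ — the endpoint $s=a_1$ of $\mathcal{S}_1$ is the curve point of arclength $a_1+L$, and $s=b_2$ of $\mathcal{S}_2$ has arclength $b_2-L$ — then produces $u^{n+1}_1(a_1)=u^{n}_2(a_1+L)$, $u^{n+1}_1(b_1)=u^{n}_2(b_1)$, $u^{n+1}_2(a_2)=u^{n}_1(a_2)$, $u^{n+1}_2(b_2)=u^{n}_1(b_2-L)$, which together with the subdomain equations is precisely \eqref{eq:yazdani_contrib_RASCPM2}.

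The delicate step I anticipate is the $h\to0$ reduction in (i): one must argue that the family of embedding solutions on the $O(h)$–wide tubes $\Omega_j$ genuinely converges to a normal–constant function, and that this limit is compatible with the transmission conditions imposed on the shrinking boundaries $\Gamma_{jk}$ — in particular that $\text{cp}_\mathcal{S}(\Gamma_{jk})$ tends to the endpoints of $[a_j,b_j]$ and that the RAS union $u^n = u^n_1|_{\tilde{\mathcal{S}}_1}\cup u^n_2|_{\tilde{\mathcal{S}}_2}$ restricts consistently to $\mathcal{S}$. Once the operator limit $\Delta^{\#}_h\to\Delta_\mathcal{S}$ on $\mathcal{S}$ is in hand, passing to the periodic one-dimensional formulation and tallying the overlaps are routine.
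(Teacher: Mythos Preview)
Your proposal is correct and follows the same two-step route as the paper --- consistency of $\Delta^{\#}_h$ with $\Delta_\mathcal{S}$ as $h\to 0$, then $\Delta_\mathcal{S}=\mathrm{d}^2/\mathrm{d}s^2$ under arclength parameterization --- only you supply considerably more detail: the paper dispatches step (i) in one sentence by citing the consistency result for the regularized operator, and does not spell out the transmission conditions at all. Your penalty-decomposition argument and explicit tracking of the endpoints modulo $L$ fill in exactly the points the paper leaves implicit.
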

	\begin{proof}
		
		For a smooth manifold $\mathcal{S}$, the regularized operator $\Delta^{\#}_h$ is consistent with the Laplace operator on the manifold \cite{ImplicitCPM2008}. Thus the CPM  is consistent with the surface intrinsic PDE problems in the limit $h\rightarrow 0$ where $h$ denotes the mesh size.
		Parameterizing a one-dimensional manifold $\mathcal{S}$ in $\mathbb{R}^d$ by arclength $s$, the differential operator $\Delta_\mathcal{S}$ becomes $\mathrm{d}^2/\mathrm{d}s^2$, yielding our result.
	\end{proof}
	
	In \cite{QADDOURI2008}, the convergence of \eqref{eq:yazdani_contrib_RASCPM2} is studied for an equal-sized partitioning. The partitioning arising from the PS-CPM problems in \eqref{eq:yazdani_contrib_RASCPM} is performed within the embedding space.  As a consequence, our subdomains will be unequal.
	This motivates us to investigate the convergence of the method for an unequal-sized partitioning.
	
	By defining the errors $\epsilon_j^n = u_j^n - u|_{\mathcal{S}_j}$,  $j = 1, 2$, and using the linearity of \eqref{eq:yazdani_contrib_helmholtz}, iteration \eqref{eq:yazdani_contrib_RASCPM2} is reduced to:
	\begin{equation} \label{eq:yazdani_contrib_error-RASCPM}
	\begin{cases}
	(c - \frac{\mathrm{d}^2}{\mathrm{d}s^2})\epsilon^{n+1}_1 = 0, & \text{in } \mathcal{S}_1, \\
	\epsilon^{n+1}_1(a_1) = \epsilon^n_2(a_1+L), \\
	\epsilon^{n+1}_1(b_1) = \epsilon^n_2(b_1),
	\end{cases}
	\quad , \quad
	\begin{cases}
	(c - \frac{\mathrm{d}^2}{\mathrm{d}s^2})\epsilon^{n+1}_2 = 0, & \text{in } \mathcal{S}_2, \\
	\epsilon^{n+1}_2(a_2) = \epsilon^n_1(a_2), \\
	\epsilon^{n+1}_2(b_2) = \epsilon^n_1(b_2-L).
	\end{cases}
	\end{equation}
	
	After solving the ODEs in \eqref{eq:yazdani_contrib_error-RASCPM}, error values at the boundaries can be computed. At each iteration, these error values depend on the error values at the boundaries from the previous iteration.    To state this concisely, we define an error vector at iteration $n$ which is comprised of the error values at the boundaries:
	\begin{equation}\label{eq:yazdani_contrib_errorvec}
	\boldsymbol{\mathrm{\epsilon}}^{n} := [\epsilon_1^{n}(b_2-L), \epsilon_1^{n}(a_2), \epsilon_2^{n}(b_1), \epsilon_2^{n}(a_1+L)]^T.
	\end{equation}
	We obtain, in matrix form,
	$\boldsymbol{\mathrm{\epsilon}}^{n+1} = \boldsymbol{\mathrm{M}_\text{PS}}\boldsymbol{\mathrm{\epsilon}}^{n},$
	where
	\begin{equation}\label{eq:yazdani_contrib_RASMat}
	\boldsymbol{\mathrm{M}}_\text{PS} = 
	\begin{bmatrix}
	0 & 0 & r_1 & p_1 \\
	0 & 0 & q_1 & s_1 \\
	r_2 & p_2 & 0 & 0 \\
	q_2 & s_2 & 0 & 0 \\
	\end{bmatrix}
	\end{equation}
	is called the iteration matrix.  It has entries 
	\begin{equation*}\textit{}
		p_j = \frac{1-e^{2\sqrt{c}(\ell_j-\delta_{j-1})}}{1-e^{2\sqrt{c}\ell_j}}e^{\sqrt{c}\delta_{j-1}}, \qquad
		r_j = \frac{1-e^{2\sqrt{c}\delta_{j-1}}}{1-e^{2\sqrt{c}\ell_j}}e^{\sqrt{c}(\ell_j-\delta_{j-1})},
	\end{equation*}
	\begin{equation} \label{eq:yazdani_contrib_quant}
	q_j = \frac{1-e^{2\sqrt{c}(\ell_j-\delta_j)}}{1-e^{2\sqrt{c}\ell_j}}e^{\sqrt{c}\delta_j}, \qquad s_j = \frac{1-e^{2\sqrt{c}\delta_j}}{1-e^{2\sqrt{c}\ell_j}}e^{\sqrt{c}(\ell_j-\delta_j)},
	\end{equation}
	for $j = 1, 2$ and $\delta_0 \equiv \delta_2$. 
	The definitions of  $\delta_j$ and $\ell_j$ may be found at the beginning of this section.
	The following lemma holds for the quantities in \eqref{eq:yazdani_contrib_quant}:
	
	\begin{lemma}[\cite{Alireza2021}]\label{lemma:yazdani_contrib_quant}
		Suppose $0 < \delta_1 + \delta_2 < \min\{\ell_1, \ell_2\}$.   Then the scalars $p_j, q_j, r_j, s_j, j=1,2,$ appearing in \eqref{eq:yazdani_contrib_quant} satisfy $0 < q_j + s_j < 1$ and $0 < p_j + r_j < 1$.
	\end{lemma}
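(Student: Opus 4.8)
The plan is to collapse each of the sums $p_j+r_j$ and $q_j+s_j$ into a single closed-form expression by direct algebra, and then read off both inequalities from strict convexity of the exponential. Throughout write $\mu=\sqrt{c}>0$, and note that the hypothesis $0<\delta_1+\delta_2<\min\{\ell_1,\ell_2\}$ together with $\delta_1,\delta_2>0$ (and $\delta_0\equiv\delta_2$) forces $\delta_{j-1},\delta_j\in(0,\ell_j)$ for $j=1,2$.

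First I would fix $j$, abbreviate $\ell=\ell_j$ and $\alpha=\delta_{j-1}$, and put $p_j$ and $r_j$ over the common denominator $1-e^{2\mu\ell}$. Expanding the numerator $(1-e^{2\mu(\ell-\alpha)})e^{\mu\alpha}+(1-e^{2\mu\alpha})e^{\mu(\ell-\alpha)}$ and regrouping, the four terms pair up so that the numerator factors as $(1-e^{\mu\ell})\bigl(e^{\mu\alpha}+e^{\mu(\ell-\alpha)}\bigr)$, while the denominator is $(1-e^{\mu\ell})(1+e^{\mu\ell})$. Cancelling the common factor $1-e^{\mu\ell}\neq 0$ gives
\[
p_j+r_j=\frac{e^{\mu\alpha}+e^{\mu(\ell-\alpha)}}{1+e^{\mu\ell}},\qquad\text{and identically}\qquad q_j+s_j=\frac{e^{\mu\delta_j}+e^{\mu(\ell-\delta_j)}}{1+e^{\mu\ell}}.
\]
Since numerator and denominator are strictly positive, $p_j+r_j>0$ and $q_j+s_j>0$ are immediate.

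For the upper bounds I would consider $g(t)=e^{\mu t}+e^{\mu(\ell-t)}$ on $[0,\ell]$. This function is strictly convex, hence attains its maximum over the closed interval at an endpoint, and $g(0)=g(\ell)=1+e^{\mu\ell}$; therefore $g(t)<1+e^{\mu\ell}$ for every $t\in(0,\ell)$. Applying this with $t=\alpha=\delta_{j-1}$ and with $t=\delta_j$ yields $p_j+r_j<1$ and $q_j+s_j<1$. The only step requiring care is the numerator expansion: the cancellation that produces the factor $1-e^{\mu\ell}$ is the crux of the argument, but once it is spotted the simplification and the convexity estimate are both routine. (As a byproduct, the closed forms show that $p_j+r_j$ and $q_j+s_j$ depend on the overlaps only through $\delta_{j-1}$, resp. $\delta_j$, and $\ell_j$, which is convenient for the later analysis.) \qed
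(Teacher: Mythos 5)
Your argument is correct, and I verified the key algebraic step: the numerator $(1-e^{2\mu(\ell-\alpha)})e^{\mu\alpha}+(1-e^{2\mu\alpha})e^{\mu(\ell-\alpha)}$ does factor as $(1-e^{\mu\ell})\bigl(e^{\mu\alpha}+e^{\mu(\ell-\alpha)}\bigr)$, giving $p_j+r_j=\bigl(e^{\mu\delta_{j-1}}+e^{\mu(\ell_j-\delta_{j-1})}\bigr)/\bigl(1+e^{\mu\ell_j}\bigr)$ and similarly for $q_j+s_j$. Note that the paper itself does not prove this lemma — it imports it from the cited reference — so there is no in-text proof to compare against; your derivation is a legitimate self-contained substitute, and a useful consistency check is that your closed form reproduces the expression $\bigl(e^{\sqrt{c}L/2}+e^{\sqrt{c}\delta}\bigr)/\bigl(1+e^{\sqrt{c}(L/2+\delta)}\bigr)$ appearing in Corollary~\ref{cor:yazdani_contrib_1} for the equal-sized case. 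Two small remarks. First, the upper bound can be obtained even more cheaply than by convexity, since
\[
1+e^{\mu\ell}-e^{\mu t}-e^{\mu(\ell-t)}=\bigl(e^{\mu t}-1\bigr)\bigl(e^{\mu(\ell-t)}-1\bigr)>0 \quad\text{for } t\in(0,\ell),
\]
which avoids any appeal to the maximum principle for convex functions; your convexity argument is of course equally valid. Second, you are right to flag that the lemma's stated hypothesis $0<\delta_1+\delta_2<\min\{\ell_1,\ell_2\}$ does not by itself force $\delta_1>0$ and $\delta_2>0$ individually; that positivity comes from the geometric setup (both overlaps are assumed nonempty), and your proof genuinely needs it, since at $\delta_{j-1}=0$ one gets $p_j+r_j=1$ exactly. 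Making that dependence explicit, as you did, is appropriate.
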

	Now, we arrive at the most important result of this section.
	\begin{theorem}
		Under the restrictions on the partitioning of the manifold $\mathcal{S}$ detailed in Lemma~1 above, the PS iteration \eqref{eq:yazdani_contrib_RASCPM2} for the positive Helmholtz equation on any closed, smooth one-dimensional manifold converges globally.
	\end{theorem}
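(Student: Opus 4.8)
The plan is to reduce global convergence to a spectral-radius estimate for the iteration matrix $\mathbf{M}_{\mathrm{PS}}$ of \eqref{eq:yazdani_contrib_RASMat}. Since the boundary error obeys $\boldsymbol{\mathrm{\epsilon}}^{n+1} = \mathbf{M}_{\mathrm{PS}}\,\boldsymbol{\mathrm{\epsilon}}^{n}$, it suffices to show $\rho(\mathbf{M}_{\mathrm{PS}}) < 1$, for then $\mathbf{M}_{\mathrm{PS}}^{n}\to 0$ and all the boundary data feeding every subdomain solve tend to zero, independently of the initial guess. The first step is to exploit the $2\times 2$ block anti-diagonal form of $\mathbf{M}_{\mathrm{PS}}$: writing $A_1 = \left[\begin{smallmatrix} r_1 & p_1\\ q_1 & s_1\end{smallmatrix}\right]$ and $A_2 = \left[\begin{smallmatrix} r_2 & p_2\\ q_2 & s_2\end{smallmatrix}\right]$, one has
\begin{equation}\label{eq:yazdani_contrib_Msquared}
\mathbf{M}_{\mathrm{PS}}^{2} = \begin{bmatrix} A_1 A_2 & 0 \\ 0 & A_2 A_1 \end{bmatrix}.
\end{equation}
By the spectral mapping theorem $\rho(\mathbf{M}_{\mathrm{PS}})^{2} = \rho(\mathbf{M}_{\mathrm{PS}}^{2})$, and since $A_1A_2$ and $A_2A_1$ share their nonzero eigenvalues we have $\rho(A_1A_2)=\rho(A_2A_1)$; hence the claim reduces to showing $\rho(A_1A_2) < 1$.

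Next I would verify that all of $p_j,q_j,r_j,s_j$ are strictly positive. The hypothesis $0 < \delta_1+\delta_2 < \min\{\ell_1,\ell_2\}$ forces $0 < \delta_{j-1} < \ell_j$ and $0 < \delta_j < \ell_j$ for $j=1,2$, so in each quotient in \eqref{eq:yazdani_contrib_quant} the numerator and the denominator are both negative while the exponential prefactor is positive. Thus $A_1$, $A_2$, and therefore $A_1A_2$, are entrywise positive, which gives the standard bound $\rho(A_1A_2) \le \|A_1A_2\|_\infty$, the maximum row sum of $A_1A_2$.

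The crucial algebraic observation is that a direct multiplication writes the two row sums of $A_1A_2$ as $r_1(p_2+r_2) + p_1(q_2+s_2)$ and $q_1(p_2+r_2) + s_1(q_2+s_2)$. Lemma~\ref{lemma:yazdani_contrib_quant} supplies $0 < p_2+r_2 < 1$ and $0 < q_2+s_2 < 1$, so these sums are strictly dominated by $r_1+p_1$ and $q_1+s_1$ respectively, and each of these is again $<1$ by Lemma~\ref{lemma:yazdani_contrib_quant}. Hence $\|A_1A_2\|_\infty < 1$, so $\rho(A_1A_2) < 1$ and therefore $\rho(\mathbf{M}_{\mathrm{PS}}) < 1$.

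Finally I would translate the spectral bound into convergence of the functions. Since $\rho(\mathbf{M}_{\mathrm{PS}}) < 1$ we get $\boldsymbol{\mathrm{\epsilon}}^{n}\to 0$, i.e. every boundary value used at iteration $n$ tends to zero. Each $\epsilon^{n}_j$ solves $(c-\mathrm{d}^2/\mathrm{d}s^2)\epsilon^{n}_j = 0$ on $\mathcal{S}_j$ with $c>0$, so the maximum principle bounds $\|\epsilon^{n}_j\|_{\infty,\mathcal{S}_j}$ by the larger of its two boundary values; thus $\epsilon^{n}_j\to 0$ uniformly on $\mathcal{S}_j$, and the assembled error $u^{n}-u = \epsilon^{n}_1|_{\tilde{\mathcal{S}}_1}\cup\epsilon^{n}_2|_{\tilde{\mathcal{S}}_2}\to 0$, which is the asserted global convergence. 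I expect the only genuine obstacle to be spotting that the row sums of $A_1A_2$ factor exactly into the combinations $p_j+r_j$ and $q_j+s_j$ that Lemma~\ref{lemma:yazdani_contrib_quant} controls; the block reduction in \eqref{eq:yazdani_contrib_Msquared} and the maximum-principle step are routine once that identity is in hand.
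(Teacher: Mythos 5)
Your argument is correct and rests on the same two ingredients as the paper's proof: the bound $\rho \le \|\cdot\|_\infty$ and Lemma~\ref{lemma:yazdani_contrib_quant}'s control of $p_j+r_j$ and $q_j+s_j$. The detour through $\mathbf{M}_{\mathrm{PS}}^{2}$ and the row sums of $A_1A_2$ is unnecessary, though: the row sums of $\mathbf{M}_{\mathrm{PS}}$ itself are exactly $r_j+p_j$ and $q_j+s_j$, so $\|\mathbf{M}_{\mathrm{PS}}\|_\infty<1$ follows directly from the lemma, which is all the paper does.
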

	\begin{proof}
		We must show the spectral radius of the iteration matrix, $\rho(\boldsymbol{\mathrm{M}}_\text{PS})$,  is less than 1. $\|\boldsymbol{\mathrm{M}}_\text{PS}\|_\infty$ bounds the spectral radius,
		$\rho(\boldsymbol{\mathrm{M}}_\text{PS}) \leq \|\boldsymbol{\mathrm{M}}_\text{PS}\|_\infty = \max \{r_j + p_j, q_j + s_j\}.$
		In Lemma \ref{lemma:yazdani_contrib_quant}, we have shown that $0 < p_j + r_j < 1$ and $0 < q_j + s_j < 1$. Therefore, $\|\boldsymbol{\mathrm{M}}_\text{PS}\|_\infty < 1$, and consequently the algorithm converges.
	\end{proof}
	
	We define the convergence factor $\kappa$ as the ratio of the $\infty$-norm of the error vector \eqref{eq:yazdani_contrib_errorvec} at two steps $n+2$ and $n$, 
	$\kappa = \|\boldsymbol{\mathrm{\epsilon}}^{n+2}\|_\infty/\|\boldsymbol{\mathrm{\epsilon}}^{n}\|_\infty$.
	Considering the inequality $\|\boldsymbol{\mathrm{\epsilon}}^{n+1}\|_\infty \leq \|\boldsymbol{\mathrm{M}}_\text{PS}\|_\infty\|\boldsymbol{\mathrm{\epsilon}}^{n}\|_\infty$, $\|\boldsymbol{\mathrm{M}}_\text{PS}\|_\infty^2$ is an upper bound for the convergence factor. That is, $\kappa \leq \|\boldsymbol{\mathrm{M}}_\text{PS}\|_\infty^2$.
	In the following corollary, we show that the our analysis for the equal-sized partitioning agrees with the one obtained in \cite{QADDOURI2008}.
	\begin{corollary}\label{cor:yazdani_contrib_1}
		Assume an equal-sized partitioning for the PS iteration \eqref{eq:yazdani_contrib_RASCPM2}. That is, $\mathcal{S}_1 = [-\delta, L/2+\delta]$, $\mathcal{S}_2 = [L/2-\delta, L+\delta]$. Then, the convergence factor can be calculated as $\kappa \leq (p + r)^2 = (e^{\sqrt{c}L/2}+e^{\sqrt{c}\delta})^2/(1+e^{\sqrt{c}(L/2+\delta)})^2$.
	\end{corollary}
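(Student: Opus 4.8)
The plan is to specialize the iteration matrix $\boldsymbol{\mathrm{M}}_\text{PS}$ in \eqref{eq:yazdani_contrib_RASMat}--\eqref{eq:yazdani_contrib_quant} to the symmetric two-subdomain partition, to compute $\|\boldsymbol{\mathrm{M}}_\text{PS}\|_\infty$ explicitly, and then to invoke the bound $\kappa \le \|\boldsymbol{\mathrm{M}}_\text{PS}\|_\infty^2$ already derived just above the corollary. First I would read off the geometric data from $\mathcal{S}_1 = [-\delta, L/2+\delta]$ and $\mathcal{S}_2 = [L/2-\delta, L+\delta]$, i.e. $a_1=-\delta$, $b_1=L/2+\delta$, $a_2=L/2-\delta$, $b_2=L+\delta$, and note that the subdomain lengths $\ell_1=\ell_2$ agree and that the two overlaps $\delta_1=b_1-a_2$ and $\delta_2=b_2-(a_1+L)$ agree; since $\delta_0\equiv\delta_2$, all of $\delta_0,\delta_1,\delta_2$ coincide. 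Feeding this into \eqref{eq:yazdani_contrib_quant} collapses the formulas: $p_1=p_2=:p$, $r_1=r_2=:r$, and moreover $q_j=p$, $s_j=r$. Hence every nonzero row sum of $\boldsymbol{\mathrm{M}}_\text{PS}$ equals $p+r$, so $\|\boldsymbol{\mathrm{M}}_\text{PS}\|_\infty=p+r$, and Lemma~\ref{lemma:yazdani_contrib_quant} already ensures $0<p+r<1$.

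The main computation is then to put $p+r$ in closed form. Setting $A=e^{\sqrt{c}(\ell_1-\delta_0)}$ and $B=e^{\sqrt{c}\delta_0}$ — and observing $\ell_1-\delta_0=L/2$ for this partition — the entries read $p=(1-A^2)B/(1-A^2B^2)$ and $r=(1-B^2)A/(1-A^2B^2)$, so that
\[
p+r=\frac{(1-A^2)B+(1-B^2)A}{1-A^2B^2}=\frac{(A+B)-AB(A+B)}{(1-AB)(1+AB)}=\frac{A+B}{1+AB}.
\]
Substituting $A$ and $B$ back and expressing everything through $L$ and $\delta$ yields the claimed formula for $p+r$; the only point requiring care is the bookkeeping of the overlap (a factor of two, according to whether $\delta$ is read as a half-overlap or a full overlap in \eqref{eq:yazdani_contrib_quant}).

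Finally, combining $\kappa\le\|\boldsymbol{\mathrm{M}}_\text{PS}\|_\infty^2=(p+r)^2$ with this closed form gives the corollary, and agreement with \cite{QADDOURI2008} is then immediate. I do not expect a genuine obstacle: the only nontrivial ingredient is the telescoping identity $(1-A^2)B+(1-B^2)A=(A+B)(1-AB)$ in the numerator together with the difference of squares $1-A^2B^2=(1-AB)(1+AB)$ in the denominator, after which the result follows from Lemma~\ref{lemma:yazdani_contrib_quant} and the previously established upper bound on $\kappa$. The ``hard part'', such as it is, will simply be to confirm that the symmetric partition really does force the pairwise coincidences $p_1=p_2$, $q_j=p_j$, $r_j=s_j$ used to identify $\|\boldsymbol{\mathrm{M}}_\text{PS}\|_\infty$ with a single quantity $p+r$.
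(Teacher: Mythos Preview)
Your proposal is correct and follows essentially the same route as the paper: identify the coincidences $p_1=p_2=q_1=q_2=:p$ and $r_1=r_2=s_1=s_2=:r$ under the symmetric partition, then bound $\kappa$ by $(p+r)^2$. The only cosmetic difference is that the paper reaches $p+r$ via the spectral radius (observing that $\boldsymbol{\mathrm{M}}_\text{PS}$ has constant row and column sums) rather than via $\|\cdot\|_\infty$, and it simply asserts ``direct substitution'' where you carry out the $(A+B)/(1+AB)$ algebra explicitly; your flag about the factor-of-two bookkeeping is apt, since the stated partition gives $\delta_1=\delta_2=2\delta$.
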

	
	\begin{proof}
		If we make the simplifying assumption that both subdomains are of equal size and have a common overlap size, then $q_1 = q_2 = p_1 = p_2 = p$ and $s_1 = s_2 = r_1 = r_2 = r$. The iteration matrix becomes a doubly stochastic matrix with row and column sums of $p+r$, and subsequently $\rho(\boldsymbol{\mathrm{M}}_\text{PS})=p+r$. By a direct substitution for $p$ and $r$, we obtain $\kappa = \rho(\boldsymbol{\mathrm{M}}_\text{PS})^2 = (e^{\sqrt{c}L/2}+e^{\sqrt{c}\delta})^2/(1+e^{\sqrt{c}(L/2+\delta)})^2$.
	\end{proof}
	\section{Numerical Simulation}
	\label{sec:yazdani_contrib_NumSim}
	
	Here we numerically verify the results obtained in Section \ref{sec:yazdani_contrib_RAS-CPM}. Since numerical solutions of the PS-CPM and the PS algorithm will be compared, we use RAS as the domain decomposition method to build a global approximate solution. It is shown in \cite{Efstathiou2003} that RAS and PS are identical iterations and have the same convergence rate. Hence, we will use RAS-CPM instead of PS-CPM hereafter. 
	
	Theorem \ref{thm:yazdani_contrib_theorem1} shows that the CPM equipped with RAS as a solver is in the limit as $h\rightarrow 0$ equivalent to RAS applied to a 1D periodic problem.
	To verify this, we numerically solve \eqref{eq:yazdani_contrib_helmholtz} with $c=1$ and $f(s) = \sin(2\pi s/L)$ using the RAS-CPM for the boundary of a M\"obius strip with width 1, whose center circle has radius 1. The initial guess for the discrete solution is taken as $U^{(0)} = 0$.  Two disjoint subdomains are created by splitting the length of the curve in a 1:2 ratio, and overlapping subdomains are formed using overlaps $\delta = \delta_1 = \delta_2 = 0.1L$. The solution using the RAS-CPM with grid spacing $h = 0.01$ and fourth degree barycentric Lagrangian interpolation applied in a dimension-by-dimension fashion is shown in Fig.~\ref{fig:yazdani_contrib_RAS-CPM} (left).   Here,  the disjoint subdomains are visualized as point clouds. Convergence histories for various grid spacings are depicted in Fig.~\ref{fig:yazdani_contrib_RAS-CPM} (right).  Here, the RAS and the RAS-CPM contraction factors are compared with the theoretical result. The errors are defined as the max-norm of the difference of the DD solution and the single domain solution.
	As we observe in Fig.\ref{fig:yazdani_contrib_RAS-CPM} (right), the RAS error has the same decay rate as that described in Theorem~\ref{thm:yazdani_contrib_theorem1} (shown as the dashed line).  In addition, the RAS-CPM error tends toward the RAS error as the mesh size is reduced. 
	
	\begin{figure}[hpbt]
		\centering
		\includegraphics[width=0.49\linewidth]{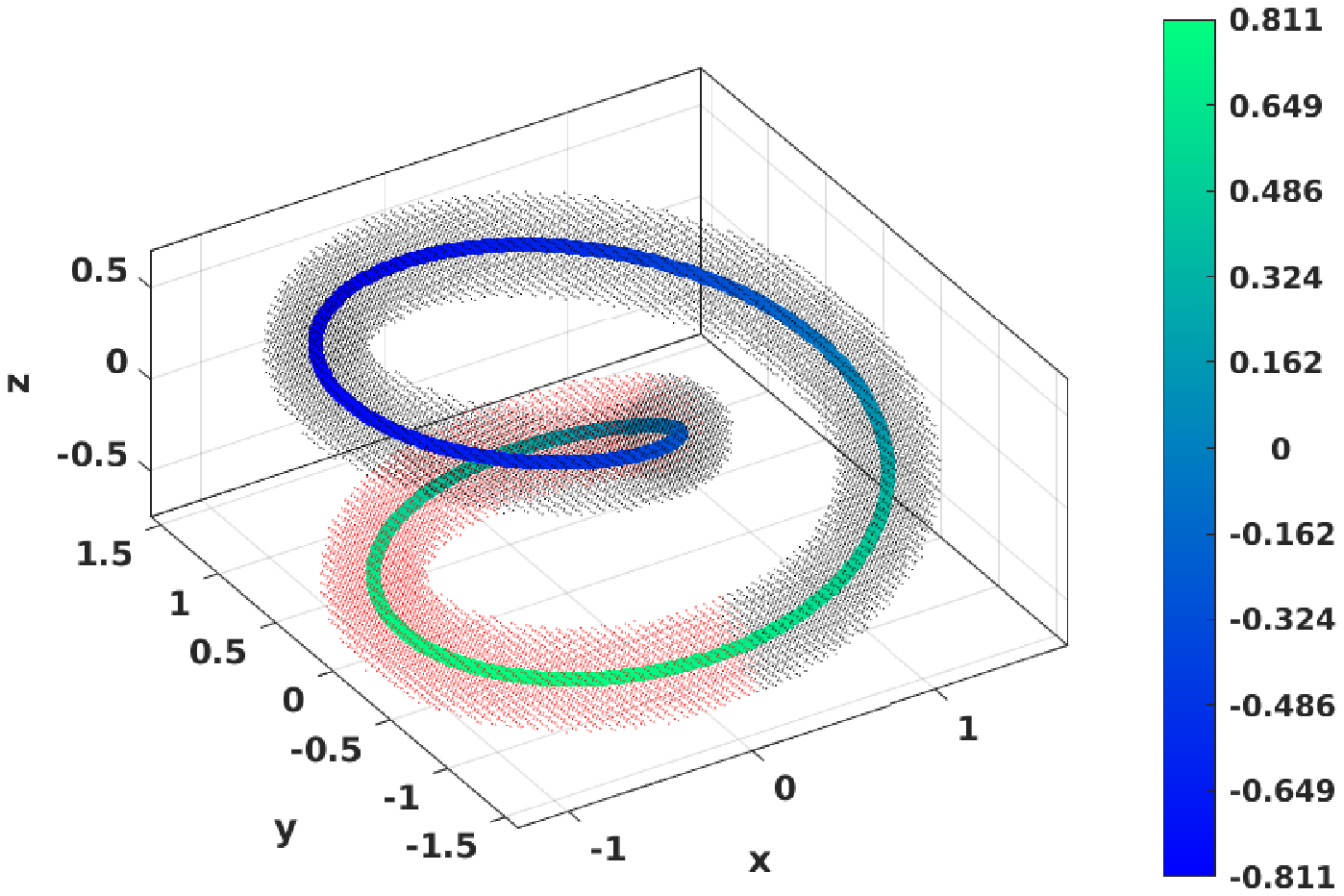}
		\includegraphics[width=0.49\linewidth]{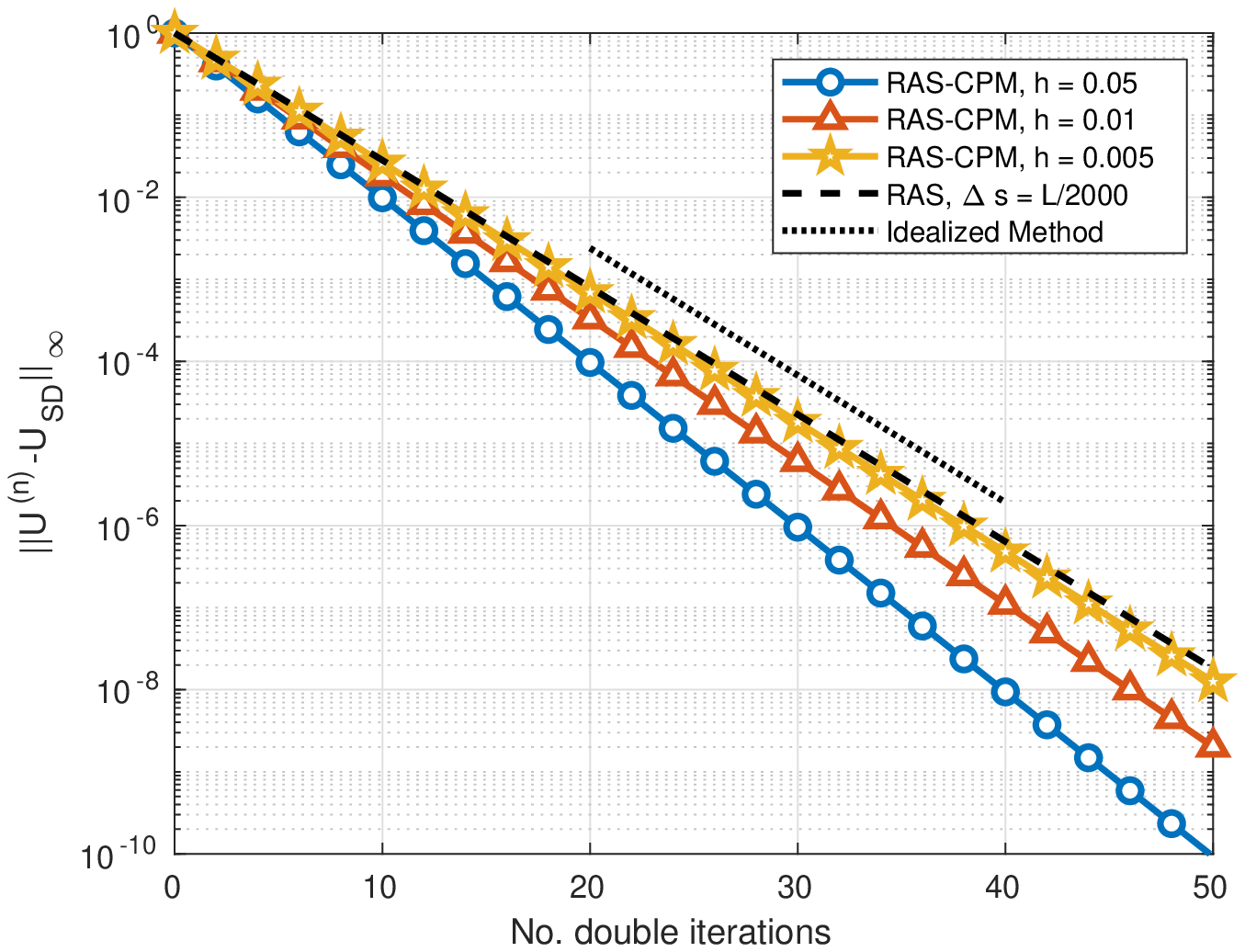}
		\caption{\textbf{Left:} RAS-CPM solution of the surface intrinsic Helmholtz equation on edge of a M\"obius strip. The disjoint subdomains are depicted. \textbf{Right:} Error versus the double iteration number.}
		\label{fig:yazdani_contrib_RAS-CPM}
	\end{figure}
	
	As another experiment, \eqref{eq:yazdani_contrib_helmholtz} is solved with two equal-sized subdomains, assuming $\mathcal{S}$ is the unit circle. The disjoint subdomains are shown in Fig.~\ref{fig:yazdani_contrib_Conv} (left). Fig.~\ref{fig:yazdani_contrib_Conv} (right) shows the effect of the overlap parameter $\delta$ on RAS-CPM for three different grids ($h=0.05, 0.01, 0.005$). 
	For a given $h$ and $\delta$, the numerical convergence factor changes slightly as the iteration progresses, hence we present an average of the convergence factor over all iterations.
	To compare with the result in Corollary \ref{cor:yazdani_contrib_1}, the theoretical convergence factor associated with a double iteration, $(e^{L/2}+e^{\delta})^2/(1+e^{L/2+\delta})^2$, is shown in Fig.~\ref{fig:yazdani_contrib_Conv} (right) as a dashed line. 
	The observed RAS-CPM contraction factor converges to the theoretical value as the grid quality improves.
	By increasing the overlap, $\kappa$ is reduced and a better convergence factor is obtained.
	
	\begin{figure}[t]
		\centering
		\includegraphics[width=0.49\linewidth]{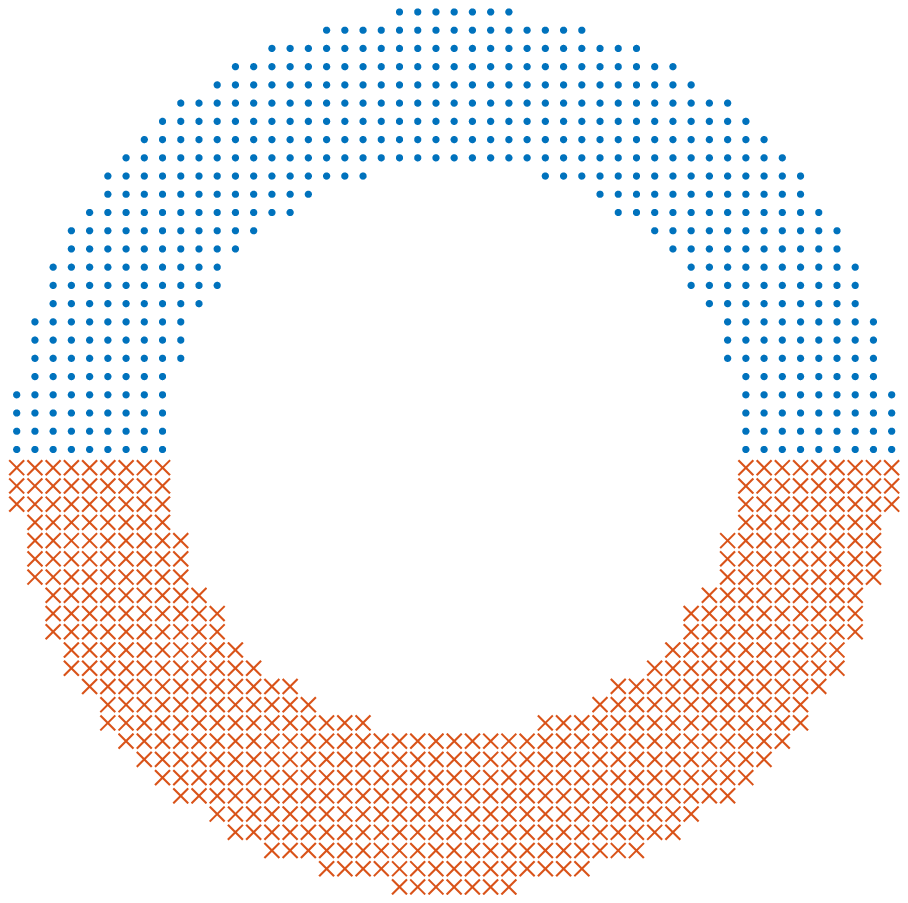}
		\includegraphics[width=0.49\linewidth]{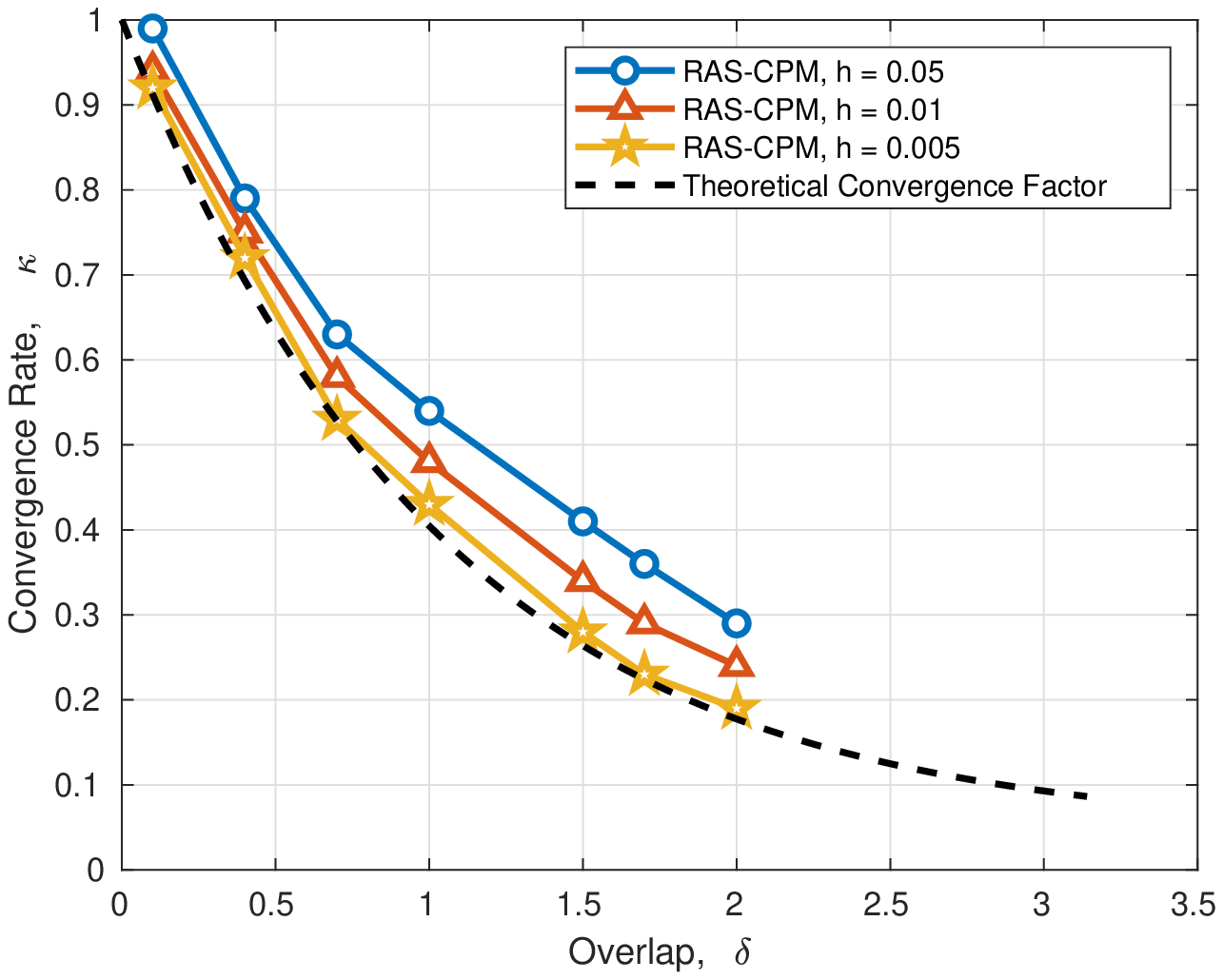}
		\caption{\textbf{Left:} Equal-sized disjoint subdomains for the unit circle. \textbf{Right:} Comparison of the RAS-CPM convergence factor and theoretical convergence factor for different values of overlap parameter in an equal-sized subdomain configuration for the unit circle.}
		\label{fig:yazdani_contrib_Conv}
	\end{figure}
	\section{Conclusion}
	\label{sec:yazdani_contrib_Conc}
	
	Employing RAS as a solver for the CPM parallelizes the solution of PDEs on surfaces and enhances the performance for large scale problems. In this paper, convergence of the (continuous) CPM equipped with a restricted additive Schwarz solver was investigated for a one-dimensional manifold in $\mathbb{R}^d$. Convergence was shown for the two-subdomain case;  extensions to any finite number of subdomains is under investigation \cite{Alireza2021}.   Observed convergence rates agree with our theory as the mesh spacing is refined.   Indeed, the results apply to any convergent discretization (e.g., a finite element discretization) of RAS solvers applied to surface PDEs  as the mesh spacing approaches zero. Finally, note that other variants of Schwarz methods -- sequential restricted additive Schwarz, optimized restricted additive Schwarz, and multiplicative methods -- can be utilized as a solver or a preconditioner for the CPM.  We plan to extend our analysis to these cases as well. 
	\begin{acknowledgement}
		The authors gratefully acknowledge the financial support of NSERC Canada (RGPIN 2016-04361 and RGPIN 2018-04881).
	\end{acknowledgement}
	\bibliographystyle{unsrt}
	\bibliography{yazdani_contrib.bib}
\end{document}